\documentclass[12pt]{amsart}

\usepackage{amscd,latexsym,amsthm,amsfonts,amssymb,amsmath,amsxtra,color}
\usepackage[mathscr]{eucal}
\usepackage[pdfstartview=FitH]{hyperref}
\usepackage{pdfsync}
\usepackage[backgroundcolor=green!40, linecolor=green!40]{todonotes}
\usepackage{color}
\pagestyle{plain}
\setcounter{secnumdepth}{2}

\textwidth=36pc
\addtolength{\oddsidemargin}{-3pc}
\addtolength{\evensidemargin}{-3pc}

\pagestyle{headings}
\renewcommand\theequation{\thesection.\arabic{equation}}

\newcommand{\BF}{{\mathbb {F}}}

\makeatletter
\def\Ddots{\mathinner{\mkern1mu\raise\p@
\vbox{\kern7\p@\hbox{.}}\mkern2mu
\raise4\p@\hbox{.}\mkern2mu\raise7\p@\hbox{.}\mkern1mu}}
\makeatother

\newcommand{\CJ}{{\mathcal {J}}}

\newcommand{\CO}{{\mathcal {O}}}
\newcommand{\CP}{{\mathcal {P}}}

\newcommand{\RG}{{\mathrm {G}}}

\newcommand{\Gal}{{\operatorname{Gal}}}
\newcommand{\GL}{{\operatorname{GL}}}

\newcommand{\Ind}{{\operatorname{Ind}}}

\newtheorem{thm}{Theorem}[section]

\newtheorem{lem}[thm]{Lemma}

\newtheorem {ques/conj}[thm]{Question/Conjecture}

\newtheorem{defn}[thm]{Definition}
\newtheorem{rmk}[thm]{Remark}

\makeatletter

\newcommand{\Rmnum}[1]{\expandafter\@slowromancap\romannumeral #1@}
\makeatother

\def\ignore#1{\relax}

\def\bJ{{\boldsymbol{\CJ}}}
\def\bN{{\boldsymbol{N}}}
\def\br{{\boldsymbol{r}}}

\begin{document}
\renewcommand{\theequation}{\arabic{equation}}
\numberwithin{equation}{section}

\title[Local Converse Problem]{On the sharpness of the bound for the Local Converse Theorem of $p$-adic $\GL_N$, general $N$}

\author{Moshe Adrian}
\address{Department of Mathematics\\
Queens College, CUNY\\
Flushing, NY 11367}
\email{moshe.adrian@qc.cuny.edu}

\begin{abstract}
Let $F$ be a non-archimedean local field of characteristic zero.
In this paper we construct examples of supercuspidal representations showing that the bound $[\frac{N}{2}]$ for the local converse theorem of $\GL_N(F)$ is sharp, $N$ general, when the residual characteristic of $F$ is bigger than $N$.
\end{abstract}

\date{\today}
\subjclass[2000]{Primary 11S70, 22E50; Secondary 11F85, 22E55.}
\keywords{Local converse problem, Jacquet's conjecture, Sharpness}
\thanks{Support to Adrian was provided by a grant from the Simons Foundation \#422638 and by a PSC-CUNY award, jointly funded by the
Professional Staff Congress and The City University of New York.}
\maketitle

\section{Introduction}

Let $F$ be a non-archimedean local field of characteristic zero.
Fix a nontrivial additive character $\psi$ of $F$. Given irreducible generic representations $\pi$ and $\tau$ of $\GL_N(F)$ and $\GL_r(F)$, respectively, the twisted-gamma factor $\gamma(s, \pi \times \tau, \psi)$ is defined by using Rankin-Selberg convolution (\cite{JPSS83}) or by using Langlands-Shahidi method (\cite{S84}).
Fix $\pi$, and let $\tau$ be any irreducible generic representation of $\GL_r(F)$, $r \geq 1$. The $\gamma(s, \pi \times \tau, \psi)$ give a set of important invariants of $\pi$. A natural question to ask is how large should $r$ be in order to completely determine $\pi$ using these invariants? This is usually called the {\it Local Converse Problem} for $\RG_N=\GL_N$.  There is much history to this problem (see \cite{ALST18} for a discussion).

Recently (see \cite{JL16} and \cite{Ch16}), the Jacquet conjecture on the local converse problem for $\GL_N$ has been proven, and we have:

\begin{thm}[\cite{JL16}, \cite{Ch16}]\label{lcp1}
Let~$\pi_1,\pi_2$ be irreducible generic representations
of~$\RG_N$. If
\[
\gamma(s, \pi_1 \times \tau, \psi) = \gamma(s, \pi_2 \times \tau, \psi),
\]
as functions of the complex variable~$s$, for all irreducible
generic representations~$\tau$ of~$\RG_r$ with~$r = 1, \ldots,
[\frac{N}{2}]$, then~$\pi_1 \cong \pi_2$.
\end{thm}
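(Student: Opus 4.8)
The plan is to reduce to the case of supercuspidal representations and then to recover each $\pi_i$ from its Bessel function, converting the hypothesis on twisted gamma factors into a cell-by-cell comparison via partial Bessel functions, in the spirit of the partial-Bessel-function method of Cogdell--Shahidi. For the reduction: every irreducible generic representation of $\GL_N(F)$ is a full parabolic induction of a tensor product of essentially square-integrable representations, each of the latter being of Steinberg type $\mathrm{St}(\rho,a)$ with $\rho$ supercuspidal. By the multiplicativity of Rankin--Selberg gamma factors under parabolic induction and under the $\mathrm{St}(\rho,a)$-construction, $\gamma(s,\pi_i\times\tau,\psi)$ factors into twisted gamma factors of the supercuspidal constituents of $\pi_i$ with shifts in $s$; examining the poles of the corresponding $L$-factors (a pole of $L(s,\rho\times\wt\sigma)$ detects that $\rho$ and $\sigma$ agree up to unramified twist) lets one read off and match the supercuspidal supports of $\pi_1$ and $\pi_2$, with central characters pinned down by the $r=1$ twists. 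Since each supercuspidal constituent lies in some $\GL_a$ with $a\le N$ and one needs only to twist the smaller member of any complementary pair of blocks, the bound $r\le[\frac N2]$ is not inflated, and we may assume $\pi_1,\pi_2$ are supercuspidal representations of $\GL_N(F)$.

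Next I would pass to Whittaker models. Fix $\CW(\pi_i,\psi)$ and normalize the Whittaker function to be $1$ at the identity. For generic $\tau$ of $\GL_r(F)$, the Jacquet--Piatetski-Shapiro--Shalika local functional equation relates the Rankin--Selberg zeta integrals built from $W_i\in\CW(\pi_i,\psi)$, $W_\tau\in\CW(\tau,\psi)$ (and, when $r<N-1$, an auxiliary Schwartz datum) to their duals through $\gamma(s,\pi_i\times\tau,\psi)$. Letting $W_\tau$ and the Schwartz datum vary and applying Mellin inversion in $s$ turns the equality of gamma factors into a family of integral identities comparing $W_1$ and $W_2$ on the blocks of $\GL_N$ containing a $\GL_r$-factor. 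To organize these identities I would use partial Bessel functions: integrating $W_i$ against truncations adapted to an increasing sequence of open compact subgroups --- equivalently, working with Howe vectors $v_m$ and their associated partial Bessel functions --- produces functions that converge, as the truncation grows, to the Bessel function $j_{\pi_i}$ of $\pi_i$, and whose value on a given Bruhat cell is controlled by twists of bounded rank.

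The core is a descending induction over Bruhat cells. The Bessel function $j_{\pi_i}$ is supported on the cells $C_w$ for $w$ running over the relevant Weyl elements, namely the permutations obtained by reversing each part of a composition of $N$. I would show $j_{\pi_1}=j_{\pi_2}$ on these cells by induction downward in cell dimension: the big cell (the long Weyl element) is controlled by the $r=1$ twists, and a relevant $w$ is handled once equality is known against all $\tau$ of the rank dictated by the block sizes of $w$, the contributions of the smaller cells being absorbed by the inductive hypothesis after a judicious choice of truncation. The point that makes $[\frac N2]$, rather than the naive $N-1$, the operative bound --- and this is the delicate combinatorial heart of the proof --- is that for $\GL_N$ every relevant cell can be separated using blocks of size at most $[\frac N2]$; this is exactly the bound whose sharpness the present paper is devoted to. Once $j_{\pi_1}=j_{\pi_2}$ identically, the standard fact that a supercuspidal representation is determined by its $\psi$-Bessel function gives $\pi_1\cong\pi_2$.

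The hard part will be precisely this inductive step: bounding and then killing the error terms coming from the lower-dimensional Bruhat cells, and certifying that the twists needed never exceed rank $[\frac N2]$. This rests on sharp asymptotics for the Whittaker function near the walls of the big cell, together with an exact matching between the combinatorics of relevant Weyl elements and the ranks of the available test representations $\tau$; by contrast, the reduction to the supercuspidal case in the first step, though not short, is by now standard and presents no real conceptual difficulty.
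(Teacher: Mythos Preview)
The paper does not prove Theorem~\ref{lcp1}: it is stated with attribution to \cite{JL16} and \cite{Ch16} and then immediately reduced (via \cite[Section~2.4]{JNS15}) to the supercuspidal reformulation, Theorem~\ref{lcp2}, which is likewise only quoted. The paper's own work begins at Theorem~\ref{main}, which goes in the \emph{opposite} direction --- constructing supercuspidals that witness the sharpness of the bound $[\frac N2]$. So there is no ``paper's own proof'' of this statement to compare against.

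That said, your sketch is a fair high-level summary of the strategy in the cited references. The reduction to supercuspidals is indeed handled separately (in \cite{JNS15}), and the core argument in \cite{JL16} is exactly the partial Bessel function / Howe vector machinery you describe, with a descending induction over relevant Bruhat cells and the combinatorial input that every relevant Weyl element for $\GL_N$ has a block of size at most $[\frac N2]$. Chai's proof \cite{Ch16} is organized somewhat differently, via an explicit analysis of Bessel functions, but arrives at the same conclusion. Your outline is honest about where the difficulty lies (controlling the lower-cell error terms), and does not misrepresent the argument; it is simply a summary of results the present paper takes as input rather than something the paper establishes.
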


By \cite[Section 2.4]{JNS15}, Theorem \ref{lcp1} is shown to be equivalent to
the following theorem with the adjective ``generic" replaced by ``supercuspidal":

\begin{thm}\label{lcp2}
Let~$\pi_1,\pi_2$ be irreducible supercuspidal representations
of~$\RG_N$. If
\[
\gamma(s, \pi_1 \times \tau, \psi) = \gamma(s, \pi_2 \times \tau, \psi),
\]
as functions of the complex variable~$s$, for all irreducible
supercuspidal representations~$\tau$ of~$\RG_r$ with~$r = 1, \ldots,
[\frac{N}{2}]$, then~$\pi_1 \cong \pi_2$.
\end{thm}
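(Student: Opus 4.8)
The plan is to deduce Theorem~\ref{lcp2} directly from Theorem~\ref{lcp1}; this is the reduction carried out in \cite[Section~2.4]{JNS15}. Since every irreducible supercuspidal representation of a general linear group is generic, $\pi_1$ and $\pi_2$ are themselves irreducible generic representations of $\RG_N$; hence, by Theorem~\ref{lcp1}, it suffices to upgrade the hypothesis from supercuspidal twisting data to arbitrary irreducible generic twisting data, i.e.\ to prove that
\[
\gamma(s,\pi_1\times\tau,\psi)=\gamma(s,\pi_2\times\tau,\psi)
\]
for \emph{every} irreducible generic representation $\tau$ of $\RG_r$ with $1\le r\le[\frac{N}{2}]$.

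First I would invoke the Bernstein--Zelevinsky classification: such a $\tau$ is the unique generic constituent of a parabolically induced representation $\tau_1\times\cdots\times\tau_k$, where each $\tau_i$ is an irreducible supercuspidal representation of $\RG_{r_i}$ and $r_1+\cdots+r_k=r$. Since $r_i\ge 1$ and $\sum_i r_i=r$, we automatically have $r_i\le r\le[\frac{N}{2}]$ for every $i$, so the hypothesis of Theorem~\ref{lcp2} applies to each twisting datum $\tau_i$ and gives $\gamma(s,\pi_1\times\tau_i,\psi)=\gamma(s,\pi_2\times\tau_i,\psi)$ for all $i$.

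Next I would use the multiplicativity of the twisted gamma factor under parabolic induction in the twisting variable --- one of the standard defining properties of the Rankin--Selberg / Langlands--Shahidi gamma factor (\cite{JPSS83}, \cite{S84}): for $j=1,2$,
\[
\gamma(s,\pi_j\times\tau,\psi)=\prod_{i=1}^{k}\gamma(s,\pi_j\times\tau_i,\psi),
\]
the gamma factor of the generic constituent $\tau$ coinciding with that of the full induced representation $\tau_1\times\cdots\times\tau_k$ because $\gamma(s,\pi_j\times\,\cdot\,,\psi)$ depends only on the supercuspidal support of the twisting representation. Combining the two displays yields $\gamma(s,\pi_1\times\tau,\psi)=\gamma(s,\pi_2\times\tau,\psi)$ for all irreducible generic $\tau$ of $\RG_r$ with $r\le[\frac{N}{2}]$, and Theorem~\ref{lcp1} then forces $\pi_1\cong\pi_2$.

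Essentially all of the difficulty is already packaged inside Theorem~\ref{lcp1} (the Jacquet conjecture), proved in \cite{JL16} and \cite{Ch16}; once that is granted, the reduction above is formal, and the only point I would treat carefully is the multiplicativity property of the gamma factor together with the elementary but indispensable observation that decomposing an $r$-dimensional generic twist into supercuspidal blocks never creates a block of rank larger than $[\frac{N}{2}]$ --- this is precisely what makes the ``supercuspidal-twist'' hypothesis equivalent to the ``generic-twist'' hypothesis of Theorem~\ref{lcp1}. As noted, \cite{JNS15} also proves the converse implication, so the two formulations are genuinely equivalent; the supercuspidal version is the one adapted to the sharpness constructions in the body of the paper.
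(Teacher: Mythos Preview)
Your proposal is correct and matches the paper's approach exactly: the paper does not give an independent proof of Theorem~\ref{lcp2} but simply cites \cite[Section~2.4]{JNS15} for the equivalence with Theorem~\ref{lcp1}, which in turn is the result of \cite{JL16} and \cite{Ch16}. You have accurately unpacked what that citation contains --- the Bernstein--Zelevinsky decomposition of a generic twist into supercuspidal blocks of rank at most $r\le[\frac{N}{2}]$ together with multiplicativity of the Rankin--Selberg gamma factor --- so there is nothing to add.
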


In this paper, we show that the bound $[\frac{N}{2}]$ of $r$ is indeed sharp for Theorem \ref{lcp2} when $p > N$, where $p$ is the residual characteristic of $F$. In previous joint work with Liu, Stevens, and Tam (see \cite{ALST18}), we were only able to show this sharpness result in the case that $N$ is prime.  

Precisely, we will construct explicit examples of irreducible supercuspidal representations $\pi_1$ and $\pi_2$ which are not isomorphic, with the property that
 \[
\gamma(s, \pi_1 \times \tau, \psi) = \gamma(s, \pi_2 \times \tau, \psi)
\]
as functions of the complex variable~$s$, for all irreducible
supercuspidal representations~$\tau$ of~$\RG_r$ with~$r = 1, \ldots,
[\frac{N}{2}]-1$.    An analogous result, in the context of $L$ and epsilon factors, has been proven by Henniart \cite{H90}.

As in \cite{ALST18}, we only need to consider the case that $N \geq 5$.  In fact, the examples $\pi_1, \pi_2$ that we construct are precisely the same examples constructed in an earlier unpublished version of \cite{ALST18}.

Let us mention the main difference between this paper and \cite{ALST18}.  In \cite{ALST18}, we assumed that $N$ is prime, and this implied that the relevant Langlands parameters were irreducible.  Therefore, we needed to show that a family of gamma factors of irreducible Langlands parameters were equal.  In this paper, since $N$ is not necessarily prime, the relevant Langlands parameters are not necessarily irreducible.  Therefore, what we need to show is that a family of products of gamma factors of irreducible Langlands parameters are equal.  It goes without saying that this paper was heavily influenced by \cite{ALST18}.

In Section \ref{llp}, we recall some background about supercuspidal representations and Langlands parameters for $GL_N$, and Moy's formula for computing epsilon factors. In Section \ref{construction}, we construct the examples of supercuspidals which will show that the bound $[\frac{N}{2}]$ is sharp for Theorem \ref{lcp2}. The main result (Theorem \ref{main}), that these supercuspidals show that the bound $[\frac{N}{2}]$ is sharp, will be proven in Section
\ref{proofmaintheorem}.

\subsection{Acknowledgements}
We thank Shaun Stevens for discovering a mistake in the paper and helping us to fix it.

\section{Supercuspidal Representations and Local Langlands Parameters}\label{llp}

Let $F$ be a non-archimedean local field of characteristic zero. Let $\CO_F$ be the ring of integers of $F$, $\CP_F$ the maximal ideal in $\CO_F$, and $\BF_q$ the residual field $\CO_F / \CP_F$ with $q = p^f$ being a power of the residual characteristic $p$. In this section, we require that $p \nmid N$.
Let $W_F$ be the Weil group of $F$. 

In the case of $p \nmid N$, there is a nice parametrization of irreducible representations of $W_F$ of dimension $N$ using admissible quasi-characters introduced by Howe (\cite{Ho77}) as follows.

\begin{defn}[Howe \cite{Ho77}]
Let $E/F$ be an extension of degree $N$, $p$ doesn't divide $N$. A quasi-character $\theta$ of $E^{\times}$ is {\rm admissible} with respect to $F$ if
\begin{enumerate}
\item $\theta$ doesn't come via the norm from a proper subfield of $E$ containing $F$,
\item if the restriction $\theta|_{{1+\CP_E}}$ comes via the norm from a subfield $F \subset L \subset E$, then $E/L$ is unramified.
\end{enumerate}
\end{defn}

Two admissible characters $\theta_1$ of $E_1/F$ and $\theta_2$ of $E_2/F$ are said to be conjugate if there is an $F$-isomorphism between $E_1$ and $E_2$ which takes $\theta_1$ to $\theta_2$.

\begin{thm}[Moy, Theorem 2.2.2, \cite{M86}]
Assume that $p$ doesn't divide $N$, $E/F$ an extension of degree $N$. If $\theta$ is an admissible character of $E/F$, then $\rm Ind_{E/F} \theta$ is an irreducible $N$-dimensional representation of $W_F$. Furthermore, two admissible characters induce to equivalent representations if and only if they are conjugate, and each irreducible $N$-dimensional representation $\sigma$ of $W_F$ is induced from an admissible character.
\end{thm}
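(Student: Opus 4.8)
The plan is to combine Mackey's machinery for induced representations with a ramification-theoretic analysis of admissible characters. Throughout, via local class field theory I regard the admissible character $\theta$ of $E^\times$ as a character of the Weil group $W_E$, and read $\Ind_{E/F}\theta$ as $\Ind_{W_E}^{W_F}\theta$; since $[W_F:W_E]=[E:F]=N$ the dimension is then automatic, so the content splits into (i) irreducibility, (ii) equivalence matching conjugacy, and (iii) exhaustion.

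For (i) I would invoke Mackey's irreducibility criterion: $\Ind_{W_E}^{W_F}\theta$ is irreducible iff for every $g\in W_F\smallsetminus W_E$ the characters $\theta$ and $\theta^g$ of $W_E\cap gW_Eg^{-1}$ are distinct. Passing to the Galois closure $\wt E/F$, with $\Gamma=\Gal(\wt E/F)$ and $H=\Gal(\wt E/E)$, the relevant double cosets are indexed by the nontrivial classes in $H\bs\Gamma/H$, and for $g\in\Gamma$ one has $W_E\cap gW_Eg^{-1}=W_{E\cdot g(E)}$, the Weil group of the compositum inside $\wt E$. So the problem becomes: for each $g\in\Gamma$ with $g(E)\neq E$, the characters $\theta\circ\N_{E\cdot g(E)/E}$ and $\theta^g\circ\N_{E\cdot g(E)/g(E)}$ of $(E\cdot g(E))^\times$ differ. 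This is exactly where the two admissibility conditions are consumed, and it is the technical heart of the theorem. If the two characters agreed, then tracking the conductor-breaking subfield of $\theta|_{1+\CP_E}$ would force $\theta|_{1+\CP_E}$ to come via the norm from a subfield $L\subset E$ over which $E$ is ramified, violating condition~(2); while on a complementary (tame/Teichm\"uller) part the coincidence would force $\theta$ to factor through $\N_{E/M}$ with $M=E\cap g(E)\subsetneq E$, violating condition~(1). Making this dichotomy precise --- in essence, that a character equal to a nontrivial Galois conjugate of itself descends to the corresponding fixed field, applied step by step along the ramification filtration --- is the single genuinely delicate point; the rest is bookkeeping.

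For (ii), ``conjugate $\Rightarrow$ equivalent'' is immediate: an $F$-isomorphism $E_1\xrightarrow{\sim}E_2$ carrying $\theta_1$ to $\theta_2$ extends over $\ol F$ to an identification of $W_{E_1}$ with $W_{E_2}$ compatible with the characters, hence of the induced representations. Conversely, if $\Ind_{E_1/F}\theta_1\cong\Ind_{E_2/F}\theta_2$ then $\Hom_{W_F}$ between them is nonzero; Mackey decomposes this over $W_{E_1}\bs W_F/W_{E_2}$, and a nonvanishing summand produces $g$ with $\theta_1$ and $\theta_2^g$ agreeing on $W_{E_1}\cap gW_{E_2}g^{-1}$. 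The same ramification analysis as in (i) then forces $g(E_2)=E_1$ and $\theta_2^g=\theta_1$, i.e.\ conjugacy.

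For (iii), let $\sigma$ be irreducible of dimension $N$ with $p\nmid N$. Restrict $\sigma$ to the wild inertia subgroup $P_F\trianglelefteq W_F$; since $P_F$ is pro-$p$, $\sigma|_{P_F}$ has finite ($p$-group) image and is semisimple. Clifford theory writes $\sigma=\Ind_J^{W_F}\rho$ with $J$ the stabilizer of a $P_F$-isotypic component and $\rho$ supported on it; on $\rho$, $P_F$ acts through a single irreducible $P_F$-representation of $p$-power dimension, and as that dimension divides $N$ it must equal $1$. Iterating this reduction along the ramification filtration and using $p\nmid N$ at each stage to peel off one-dimensional pieces, one exhibits $\sigma$ as $\Ind_{W_E}^{W_F}\theta$ for a finite extension $E/F$, necessarily of degree $N$ by the dimension count, and some character $\theta$; running Mackey's criterion backwards from the irreducibility of $\sigma$ forces $\theta$ to satisfy (1) and (2), i.e.\ to be admissible. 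The main obstacle, as flagged above, is the ramification lemma underpinning (i) and (ii); the Clifford descent in (iii) and all the Mackey bookkeeping are formal once $p\nmid N$.
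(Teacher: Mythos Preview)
The paper does not contain its own proof of this statement: it is quoted verbatim as a result of Moy (Theorem~2.2.2 of \cite{M86}) and used as background, with no argument supplied. So there is no ``paper's proof'' to compare against; the only question is whether your sketch is a sound outline of the standard argument.

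It is. Parts (i) and (ii) are exactly the Mackey analysis one finds in Moy (and already in Howe \cite{Ho77} for the construction side): irreducibility via the criterion $\theta\not\equiv{}^g\theta$ on $W_E\cap gW_Eg^{-1}=W_{E\cdot g(E)}$ for $g\notin W_E$, and the equivalence statement via the Mackey decomposition of $\Hom_{W_F}$. You are right that the substantive work is the ramification lemma you flag --- showing that equality of $\theta$ and a nontrivial Galois twist on a compositum forces a norm-descent contradicting admissibility --- and your description of how conditions (1) and (2) enter is accurate in spirit, though a full proof would need to track the Howe factorization carefully. For (iii), the Clifford-theory descent through wild inertia using $p\nmid N$ is again the standard route (this is essentially Koch--Zink / Moy); your ``iterate along the ramification filtration'' is a fair summary of what happens, though in practice one organizes it via the jumps of $\sigma|_{I_F}$ rather than literally iterating. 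Nothing in your outline is wrong; it just leaves the hard lemma as a black box, which is also what the paper does by citing \cite{M86}.
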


Fix a non-trivial additive character $\psi_F$ of $F$ of level 1, that is, it is trivial on $\CP_F$ but not on $\CO_F$. For any finite extension $E/F$, define an additive character $\psi_E$ of $E$ via $\psi_E=\psi_F \circ \rm tr_{E/F}$. It is known that if $E/F$ is tamely ramified, then $\psi_E$ is also of level 1, that is, it is trivial on $\CP_E$ but not on $\CO_E$. Let $U_F$ be the group of roots of unity in $F^{\times}$ of order prime to $p$. Fix a uniformizer $\varpi_F$ of $F$ and let $C_F$ be the group generated by $U_F$ and $\varpi_F$. For any finite extension $E/F$, $C_E$ can be defined similarly, which is uniquely determined by $\varpi_F$. The norm map $N_{E/F}$ takes $C_E$ into $C_F$.

Let $E/F$ be a tamely ramified extension of degree $N$, and $\theta$ a quasi-character of $E^{\times}$. Let $c=f_E(\theta)$ be the conductoral exponent of $\theta$, that is, $\theta$ is trivial on $1+\CP_E^c$ but not on $1+\CP_E^{c-1}$. If $c>1$, then there is a unique element $\gamma_{\theta} \in C_E$ such that $\theta(1+x)=\psi_E(\gamma_{\theta}x)$ for $x \in \CP_E^{c-1}$. $\gamma_{\theta}$ is called the standard representative of $\theta$, and we say that $\gamma_{\theta}$ \emph{represents} $\theta$. Note that $\rm val_E(\gamma_{\theta})=1-c$, and if $K/E$ is a finite extension and $\phi = \theta \circ N_{K/E}$, then $\gamma_{\phi} = \gamma_{\theta}$.

\begin{defn}[Kutzko, \cite{Ku80}]\label{generic}
Let $E/F$ be an extension of degree $N$, $p$ doesn't divide $N$. A quasi-character $\theta$ of $E^{\times}$ is {\rm generic} over $F$ if either
\begin{enumerate}
\item $f_E(\theta)=1$, $E/F$ is unramified and $\theta$ doesn't come via the norm from a proper subfield of $E$ containing $F$, or
\item $f_E(\theta)>1$, and $F(\gamma_{\theta}) = E$.
\end{enumerate}
\end{defn}

Note that a generic character is necessarily admissible. If $F \subset L \subset E$ and $\theta$ is a quasi-character of $L^{\times}$, let $\theta_E=\theta \circ N_{E/L}$, or simply $\theta$ when the context is clear. Viewed as a character of $W_L$, $\theta_E$ is the restriction of $\theta$ to the subgroup $W_E$.

\begin{lem}[Howe's Factorization Lemma, \cite{Ho77} and \cite{M86}]
Let $E/F$ be an extension of degree $N$, $p$ doesn't divide $N$.
Let $\theta$ be an admissible character of $E/F$. Then there is a unique tower of fields $F=F_0 \subset F_1 \subset \cdots \subset F_s=E$ and quasi-characters $\chi = \phi_0, \phi_1, \ldots, \phi_s$ of $F^{\times}=F_0^{\times}, F_1^{\times}, \ldots, F_s^{\times}=E^{\times}$ respectively, with $\phi_k$ generic over $F_{k-1}$, $k=1,2,\ldots,s$, such that
$$\theta=\chi \phi_1 \phi_2 \cdots \phi_s.$$
$\phi_k$'s are well-defined modulo a character coming from $F_{k-1}^{\times}$ via the norm map, and the conductoral exponents $f_E(\phi_1) > f_E(\phi_2) > \cdots > f_E(\phi_s)$ are unique.
\end{lem}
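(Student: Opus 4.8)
The plan is to prove existence by induction on the conductoral exponent $f_E(\theta)$, keeping the ground field variable so that the inductive hypothesis may be invoked over intermediate subfields, and then to prove uniqueness by a parallel downward argument. We may assume $N\ge 2$ (if $E=F$, take $s=0$ and $\chi=\theta$). Also, an admissible character always has $f_E(\theta)\ge 1$: an admissible character of conductoral exponent $0$ would be unramified and would therefore come via the norm from a proper subfield (reduce along the maximal unramified subextension, then along the totally ramified part), contradicting admissibility~(i).

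\textbf{Base case.} Suppose $f_E(\theta)=1$. Then $\theta|_{1+\CP_E}$ is trivial, so it comes via the norm from $F$, and admissibility~(ii) forces $E/F$ to be unramified; admissibility~(i) then coincides with condition~(i) of Definition~\ref{generic}, so $\theta$ is generic over $F$ and one takes $s=1$, $F_1=E$, $\phi_1=\theta$, $\chi=\boI$.

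\textbf{Inductive step.} Let $c:=f_E(\theta)>1$, $\gamma:=\gamma_\theta$, and $L:=F(\gamma)$. Writing $\gamma=\zeta\varpi_F^{m}$ with $\zeta\in U_E$ gives $L=F(\zeta)$, so $L/F$ is unramified, $\gamma\in C_L$, and $e(E/L)$ divides $c-1$. If $L=E$, then $\theta$ is generic over $F$ by Definition~\ref{generic}(ii) and we are done with $s=1$. Assume $L\subsetneq E$ and put $c':=1+(c-1)/e(E/L)$, an integer $>1$. The map $1+y\mapsto\psi_L(\gamma y)$ is a nontrivial character of $(1+\CP_L^{c'-1})/(1+\CP_L^{c'})$, and since $\BC^\times$ is divisible it extends to a quasi-character $\phi$ of $L^\times$ with $f_L(\phi)=c'$ and $\gamma_\phi=\gamma$. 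Set
\[
\theta_1:=\theta\cdot(\phi\circ N_{E/L})^{-1}.
\]
Since $\gamma_{\phi\circ N_{E/L}}=\gamma_\phi=\gamma$, the quasi-character $\theta_1$ is trivial on $1+\CP_E^{c-1}$, so $f_E(\theta_1)<c$. Moreover $\theta_1$ is admissible over $L$: if $\theta_1$ (respectively $\theta_1|_{1+\CP_E}$) came via the norm from an intermediate field $L\subseteq M\subsetneq E$, then, using $\phi\circ N_{E/L}=(\phi\circ N_{M/L})\circ N_{E/M}$, so would $\theta$ (respectively $\theta|_{1+\CP_E}$), contradicting admissibility~(i) of $\theta$ over $F$ (respectively forcing $E/M$ unramified by admissibility~(ii)); hence also $f_E(\theta_1)\ge 1$ by the opening remark. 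Since $[E:L]$ divides $N$ it is prime to $p$, so the inductive hypothesis gives a factorization $\theta_1=\chi'\phi_1'\cdots\phi_t'$ along a tower $L=L_0\subset\cdots\subset L_t=E$ with $\phi_k'$ generic over $L_{k-1}$ and strictly decreasing conductoral exponents. Assemble: if $L=F$, take $\chi:=\phi\chi'$ (a quasi-character of $F^\times$, subject to no condition) and keep the tower $L_0\subset\cdots\subset L_t$ with $\phi_1',\dots,\phi_t'$. If $F\subsetneq L$: from the strict monotonicity of the exponents in the factorization of $\theta_1$ together with $f_E(\theta_1)<c$ and $e(E/L)\mid c-1$ one gets $f_E(\chi'\circ N_{E/L})\le f_E(\theta_1)<c$ and hence $f_L(\chi')\le c'-1$, so $\phi\chi'$ has standard representative $\gamma$ and conductoral exponent $c'$ and is therefore generic over $F$ by Definition~\ref{generic}(ii); take $\phi_1:=\phi\chi'$ on $F_1:=L$, $\chi:=\boI$, and append $L\subset L_1\subset\cdots\subset E$ with $\phi_{k+1}:=\phi_k'$. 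In the first case the conductor chain is inherited; in the second $f_E(\phi_1\circ N_{E/F_1})=c>f_E(\theta_1)\ge f_E(\phi_2\circ N_{E/F_2})$, so it remains strictly decreasing. As $f_E(\theta)$ strictly drops at each step the recursion terminates, completing the existence proof.

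\textbf{Uniqueness, and the main difficulty.} Argue downward from an arbitrary admissible factorization $\theta=\chi\phi_1\cdots\phi_s$. Using that standard representatives add --- so the representative of a product of characters of pairwise distinct conductoral exponents is that of the dominant one --- one identifies the bottom of the tower: $F_1=F(\gamma_\theta)$ whenever $\gamma_\theta\notin F$, while if $\gamma_\theta\in F$ one first absorbs the leading norm-from-$F$ twists into $\chi$ and reads off $F_1$ as the first field strictly containing $F$ that appears as $F(\gamma_{\theta_j})$ in the resulting descent (and $F_1=E$ if none does) --- precisely the recipe of the construction. A quasi-character of $F_1^\times$ with a prescribed standard representative and conductoral exponent $c'$ is unique up to a quasi-character of $F_1^\times$ of conductoral exponent $<c'$, and such a quasi-character, when it survives, must be pulled up from $F_0^\times=F^\times$ along the norm; this is exactly the asserted indeterminacy of $\phi_1$. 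Replacing $(\theta,F)$ by $(\theta_1,F_1)$ and iterating yields uniqueness of the tower, of the exponents $f_E(\phi_1)>\cdots>f_E(\phi_s)$, and of the $\phi_k$ modulo norms from $F_{k-1}^\times$. The delicate points are, on the existence side, the bookkeeping that preserves both admissibility and the strict conductor inequalities under the passage from $\theta$ to $\theta_1$, and on the uniqueness side, correctly locating the first field jump --- it equals $F(\gamma_\theta)$ only when $\gamma_\theta\notin F$, and otherwise emerges only after stripping off the leading tame-like twist, which is the source of the non-canonical choice of $\chi$.
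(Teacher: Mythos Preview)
The paper does not supply a proof of this lemma; it is quoted as a known result from Howe \cite{Ho77} and Moy \cite{M86}, so there is no in-paper argument to compare against. Your write-up is the standard inductive peeling argument that one finds in those sources: strip off the top layer determined by $\gamma_\theta$, check that what remains is admissible over the intermediate field, and recurse.

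That said, there is a genuine slip in your inductive step. You write $\gamma=\zeta\varpi_F^{m}$ with $\zeta\in U_E$ and conclude that $L=F(\gamma_\theta)$ is unramified over $F$. This decomposition is not available in general: $\gamma_\theta\in C_E$ has $E$-valuation $1-c$, which need not be a multiple of $e(E/F)$, so $\gamma_\theta$ need not be a unit times a power of $\varpi_F$. Indeed the paper's own construction in Section~\ref{construction} gives a counterexample: there $E/F$ is totally ramified of degree $N$ and $\gamma_\theta=\varpi_E^{2-2N}$, so for odd $N$ one has $F(\gamma_\theta)=E$, which is totally ramified, not unramified, over $F$.

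Fortunately the unramifiedness of $L/F$ is never actually used downstream. What you do need is $e(E/L)\mid(c-1)$, which follows directly from $\gamma_\theta\in L$ having $E$-valuation $1-c$, and $\gamma_\theta\in C_L$, so that $\gamma_\theta$ is a legitimate standard representative over $L$. The latter holds because the groups $C_{\bullet}$ are chosen compatibly (as the paper notes, $C_E$ is uniquely determined by $\varpi_F$), so $C_E\cap L^{\times}=C_L$. Replace your sentence by this observation and the rest of your argument goes through; but as written, the claim that $L/F$ is unramified is false and should be removed.
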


In \cite{Ho77}, Howe constructed supercuspidal representations of $\RG_N(F)$ from admissible characters of $E^{\times}$ with respect to $F$ when $E/F$ is tamely ramified of degree $N$. In \cite{M86}, Moy showed that Howe's construction exhausted all the supercuspidal representations in this case. Given an admissible character of $E^{\times}$, denote by $\pi_{\theta}$ the supercuspidal representation constructed by Howe. 

In \cite{M86}, Moy also computed the $\epsilon$-factors of ${\theta}$ as follows.  Let $\theta$ a quasi-character of $F^{\times}$. Let $c=f(\theta)$ be the conductoral exponent of $\theta$, that is, $\theta$ is trivial on $1+\mathfrak{p}_F^c$ but not on $1+\mathfrak{p}_F^{c-1}$.

When $f(\theta) > 1$, let $r = [\frac{f(\theta)+1}{2}]$. Define $c_{\theta} \in \mathfrak{p}_F^{1-f(\theta)}$ $\rm mod \ \mathfrak{p}_F^{1-r}$ such that
$$\theta(1+x)=\psi(c_{\theta}x),$$
for $x \in \mathfrak{p}_F^{r}$. If $E/F$ is a finite extension, then $c_{\theta \circ N_{E/F}} = c_{\theta}$.

Suppose $f(\theta) = f_F(\theta) =  2n+1$ is odd.  Set $H = U_F^n$, $J = U_F^{n+1}$.  We define the Gauss sum $$G(\theta,\psi)=q^{-\frac{1}{2}} \sum_{x \in H/J} \theta^{-1}(x)\psi_F(c_{\theta}(x-1)).$$

 Denote by $\epsilon(\theta, \psi)$ the value of the $\epsilon$-factor at $s=0$. Note that $\epsilon(s, \theta, \psi)=\epsilon(0, \theta \lvert \cdot \rvert^s, \psi)$.

\begin{thm}[Moy, \cite{M86}, (2.3.17)]\label{epsilonfactorformula} $ $
\begin{enumerate}
\item Suppose $f(\theta)$ is even.  Then
\[
\epsilon(\theta, \psi) = \theta^{-1}(c_{\theta}) \psi(c_{\theta}) |c_{\theta}|^{1/2}.
\]
\item Suppose $f(\theta)$ is odd. Then
\[
\epsilon(\theta, \psi) = \theta^{-1}(c_{\theta}) \psi(c_{\theta}) |c_{\theta}|^{1/2} G(\theta,\psi).
\]
\end{enumerate}
\end{thm}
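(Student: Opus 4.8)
Since $\theta$ is ramified we may take $f(\theta)\geq 1$, so $L(s,\theta)=L(1-s,\theta^{-1})=1$ and the local constant equals the local $\gamma$-factor; this is the only fact about $L$-factors that is needed. The plan is to begin from the classical Gauss-sum description of the local constant of a character of $F^{\times}$ (Tate): for a suitable normalization, choosing $\gamma\in F^{\times}$ with valuation the negative of the sum of the conductor exponents of $\theta$ and of $\psi$ --- and $c_{\theta}$ is by construction such an element --- one has
\[
\epsilon(\theta,\psi)=|c_{\theta}|^{1/2}\cdot\Bigl(q^{-f(\theta)/2}\!\!\sum_{x\in(\CO_F/\CP_F^{f(\theta)})^{\times}}\!\!\theta^{-1}(c_{\theta}x)\,\psi(c_{\theta}x)\Bigr),
\]
the parenthesized factor having absolute value $1$. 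If one prefers not to quote this, it can be obtained from Tate's local functional equation applied to the Schwartz--Bruhat function $\theta^{-1}\cdot\mathbf 1_{\CO_F^{\times}}$, whose zeta integral is the nonzero constant $\vol(\CO_F^{\times})$ and whose Fourier transform is supported on the single valuation $v(c_{\theta})$, where it is exactly the above Gauss sum. The whole statement then amounts to evaluating that finite sum.

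The first step is to ``center'' the sum at $x=1$: from $\psi(c_{\theta}x)=\psi(c_{\theta})\psi(c_{\theta}(x-1))$ and $\theta^{-1}(c_{\theta}x)=\theta^{-1}(c_{\theta})\theta^{-1}(x)$ one gets
\[
\epsilon(\theta,\psi)=\theta^{-1}(c_{\theta})\,\psi(c_{\theta})\,|c_{\theta}|^{1/2}\cdot S,\qquad S:=q^{-f(\theta)/2}\!\!\sum_{x\in(\CO_F/\CP_F^{f(\theta)})^{\times}}\!\!\theta^{-1}(x)\,\psi\bigl(c_{\theta}(x-1)\bigr).
\]
So it suffices to show $S=1$ when $f(\theta)$ is even and $S=G(\theta,\psi)$ when $f(\theta)$ is odd; this is exactly where the defining property of $c_{\theta}$ is used.

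The crux is a stationary-phase localization of $S$. Write $r=[\frac{f(\theta)+1}{2}]$ and set $\psi_{c_{\theta}}(z):=\psi(c_{\theta}z)$, an additive character with conductor $\CP_F^{f(\theta)}$; recall $\theta(1+z)=\psi_{c_{\theta}}(z)$ for $z\in\CP_F^{r}$. For each $z\in\CP_F^{r}/\CP_F^{f(\theta)}$, substituting $x\mapsto x(1+z)$ leaves $S$ unchanged and multiplies the summand by $\psi_{c_{\theta}}\bigl((x-1)z\bigr)$; averaging over $z$ and using orthogonality of additive characters (the $z$-sum is $q^{f(\theta)-r}$ if $x\in 1+\CP_F^{f(\theta)-r}$ and $0$ otherwise) gives
\[
S=q^{-f(\theta)/2}\sum_{x\in(1+\CP_F^{f(\theta)-r})/(1+\CP_F^{f(\theta)})}\theta^{-1}(x)\,\psi_{c_{\theta}}(x-1)
\]
(for $f(\theta)=1$ the averaging is vacuous and $S$ is already the asserted Gauss sum). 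If $f(\theta)=2n$ then $f(\theta)-r=r=n$, so for $x=1+t$ with $t\in\CP_F^{n}$ one has $\theta^{-1}(x)\psi_{c_{\theta}}(x-1)=\psi_{c_{\theta}}(-t)\psi_{c_{\theta}}(t)=1$; there are $q^{n}$ terms, all equal to $1$, hence $S=q^{-n}q^{n}=1$, which is part (1). If $f(\theta)=2n+1$ then $f(\theta)-r=n$ and $r=n+1$; writing $x=(1+t)(1+w)$ with $t$ over $\CP_F^{n}/\CP_F^{n+1}$ and $w$ over $\CP_F^{n+1}/\CP_F^{2n+1}$, and using $tw\in\CP_F^{2n+1}$ together with $\theta(1+w)=\psi_{c_{\theta}}(w)$, the summand becomes independent of $w$, the $w$-sum contributes $q^{n}$, and
\[
S=q^{-(2n+1)/2}\,q^{n}\sum_{x\in U_F^{n}/U_F^{n+1}}\theta^{-1}(x)\,\psi_F\bigl(c_{\theta}(x-1)\bigr)=q^{-1/2}\sum_{x\in H/J}\theta^{-1}(x)\,\psi_F\bigl(c_{\theta}(x-1)\bigr)=G(\theta,\psi),
\]
which is part (2).

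\textbf{Expected main obstacle.} Nothing here is conceptually hard; the work is entirely in the normalizations. One must fix compatible Haar measures on $F$ and $F^{\times}$, keep careful track of the conductor exponent of $\psi$ --- the paper normalizes $\psi_F$ to be trivial on $\CP_F$ but nontrivial on $\CO_F$, which is precisely what forces $v(c_{\theta})=1-f(\theta)$ and makes the power of $q$ in $|c_{\theta}|^{1/2}$ come out as stated --- and check that the Fourier transform of $\theta^{-1}\mathbf 1_{\CO_F^{\times}}$ is genuinely concentrated on one valuation (the other valuations contribute $0$ precisely because $\theta$ is ramified). The substitution $x\mapsto x(1+z)$ and the ensuing orthogonality collapse are the standard abelian stationary-phase manipulation; once it is correctly in place the even/odd dichotomy is immediate, so essentially all the care lives in the constants.
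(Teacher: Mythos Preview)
The paper does not give a proof of this statement: it is quoted verbatim from Moy \cite{M86}, (2.3.17), and used as a black box in the proof of Lemma~\ref{lem2}. So there is no ``paper's own proof'' to compare against.

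Your argument is the standard stationary-phase derivation and is correct. Starting from Tate's Gauss-sum expression for the abelian local constant, you center at $x=1$, then use the substitution $x\mapsto x(1+z)$ together with the defining relation $\theta(1+z)=\psi(c_\theta z)$ on $\CP_F^{r}$ to localize the sum to $1+\CP_F^{f(\theta)-r}$; the even/odd dichotomy then drops out exactly as you describe. The only point requiring care, which you flag correctly, is the normalization: the paper takes $\psi_F$ of level~$1$ (trivial on $\CP_F$, nontrivial on $\CO_F$) and $\epsilon(\theta,\psi)=\epsilon(0,\theta,\psi)$, which is what forces $\mathrm{val}(c_\theta)=1-f(\theta)$ and makes $|c_\theta|^{1/2}=q^{(f(\theta)-1)/2}$ match. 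With that convention fixed, your chain of equalities goes through and recovers Moy's formula.
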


\section{Construction}\label{construction}

In this section, we explicitly construct two non-isomorphic supercuspidal representations of $G_N(F)$ which will later show that the bound $[\frac{N}{2}]$ of $r$ in Conjecture \ref{lcp2} is sharp. From now until the end of the paper, we assume that $p > N$.

Fix a uniformizer $\varpi_F$ in $F$, and define a totally ramified extension $E = F[ \sqrt[N]{\varpi_F} ]$.  Since we assume that $p > N$, in particular, $p \nmid N$, $E$ is a tamely ramified extension of $F$ of degree $N$.
Set $\varpi_E := \sqrt[N]{\varpi_F}$.

Recall that we have assumed that $N \geq 5$.  The element $\beta := \varpi_E^{2-2N}$ defines a character of $1 + \CP_E^{2N-2}$, trivial on $1 + \CP_E^{2N-1}$.  When $N$ is odd, we extend this character in any way to $E^\times$, and denote the resulting extension by $\phi$. That $\phi$ is admissible follows readily since one can see that it is generic (see Definition \ref{generic}). When $N$ is even, the element $\beta_1=\beta=\varpi_E^{2-2N}$ is in $C_{E_1}$, where $E_1=F[\beta_1]\subsetneq E$. We use the same construction as in the odd case to obtain a character $\phi_1$ on $E_1^\times$. We then choose an integer $\ell \in [2,N-1]$ that is coprime to $N$ and define  $\beta_2=\varpi_E^{-\ell}$. This condition implies that $E=E_1[\beta_2]$, and $\beta_2\in C_E$. Similar to above, we define a character $\phi_2$ on $1+\mathcal{P}^{\ell}_{E}$ trivial on $1+\mathcal{P}^{\ell+1}_{E}$ by $\beta_2$, and extend it to a character, still denoted by $\phi_2$, on $E^\times$. Now take $$\phi=(\phi_1\circ N_{E/E_1})\phi_2.$$
 The above constructions imply that $\phi$ is admissible over $F$, and that the product above is the Howe factorization of $\phi$.  We emphasize that we have now defined a character $\phi$, which has a different definition when $N$ is odd from when $N$ is even.

We now define two characters $\phi^{(1)}, \phi^{(2)}$, on $E^{\times}$ by first setting $\phi^{(1)}(\varpi_E) = \phi^{(2)}(\varpi_E)$ and $\phi^{(1)}|_{k_F^{\times}} \equiv  \phi^{(2)}|_{k_F^{\times}}$. We then set $\phi^{(i)}|_{1 + \CP_E^2} \equiv \phi|_{1 + \CP_E^2}$, and then extend $\phi^{(i)}|_{1 + \CP_E^2}$ to $1 + \CP_E$ in two different ways to produce two characters $\phi^{(1)}, \phi^{(2)}$, that differ on $(1 + \CP_E) \setminus (1 + \CP_E^2)$. 

We observe that $\phi^{(1)}$ and $\phi^{(2)}$ can be chosen in a way as to not be isomorphic as admissible pairs. Indeed, there are $q - 1$ nontrivial characters of $1 + \CP_E$ that are trivial on $1 + \CP_E^2$.  However, there are at most $[E:F]$ admissible characters of $E^{\times}$ that are conjugate to any given admissible character of $E^{\times}$ (since two characters of $E^{\times}$ are isomorphic if and only if there exists an $F$-automorphism of $E$ taking one character to the other).  But $[E:F] = N$, and we have assumed from the beginning that $p > N$.  

\begin{rmk}\label{subextensions}
The requirements that we have placed on $\phi^{(1)}, \phi^{(2)}$ imply $\phi^{(1)}|_{F^{\times}} \equiv \phi^{(2)}|_{F^{\times}}$ since $\phi^{(1)}(\varpi_E) = \phi^{(2)}(\varpi_E), \phi^{(1)}|_{k_F^{\times}} \equiv \phi^{(2)}_{k_F^{\times}}$, and $1 + \mathcal{P}_F \subset 1 + \mathcal{P}_E^2$.  We will need this later.
\end{rmk}

Set $\pi_i = \pi_{\phi^{(i)}}$.  We see by the above construction that $\pi_1 \ncong \pi_2$.  
The following is our main result, showing that the bound $[\frac{N}{2}]$ of $r$ in Conjecture \ref{lcp2} is indeed sharp.

\begin{thm}\label{main}
 \begin{equation}\label{gammaequ1}
 \gamma(s, \pi_1 \times \tau, \psi) = \gamma(s, \pi_2 \times \tau, \psi),
 \end{equation}
as functions of the complex variable~$s$, for all irreducible
supercuspidal representations~$\tau$ of~$\RG_r$ with~$r = 1, \ldots,
[\frac{N}{2}]-1$.
\end{thm}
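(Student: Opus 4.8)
The plan is to reduce the equality of Rankin--Selberg $\gamma$-factors to an equality of Langlands parameters via the local Langlands correspondence for $\GL_N$, and then to verify that equality after inducing. Concretely, write $\sigma_i = \Ind_{E/F} \phi^{(i)}$ for the irreducible $N$-dimensional Weil representations attached to $\pi_i$, and for an irreducible supercuspidal $\tau$ of $\GL_r(F)$ with $r \leq [\tfrac{N}{2}]-1$ let $\rho$ be the corresponding irreducible $r$-dimensional parameter. Since the local Langlands correspondence matches $\gamma(s,\pi_i\times\tau,\psi)$ with the Deligne--Langlands $\gamma$-factor $\gamma(s,\sigma_i\otimes\rho,\psi)$, and since $\gamma$-factors are inductive in degree-zero virtual representations, it suffices to show
\[
\gamma(s,\sigma_1\otimes\rho,\psi)=\gamma(s,\sigma_2\otimes\rho,\psi)
\]
for every such $\rho$. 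Because $\sigma_i$ is induced from $E$, by inductivity this reduces to an equality of $\gamma$-factors of characters of $E^{\times}$ twisted by $\Res_{W_E}\rho$; and because $\gamma = \epsilon \cdot (L\text{-ratio})$ with the $L$-factors of supercuspidal-type objects being trivial, the whole problem becomes an equality of $\epsilon$-factors, to which Moy's formula (Theorem \ref{epsilonfactorformula}) applies.

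The first main step is therefore to decompose $\Res_{W_E}\rho$. Since $r < N/2 < [E:F]$, the restriction of the $r$-dimensional $\rho$ to $W_E$ is a sum of characters $\xi_1,\dots,\xi_m$ of $W_E$ (counted with multiplicity, $\sum \dim = r$ after accounting for the ramification/inertia data), and the point is that each $\xi_j$ has small conductor relative to $\phi^{(i)}$. I would next compute, using $\gamma(s,\sigma_i\otimes\rho,\psi) = \prod_j \gamma(s,\phi^{(i)}\cdot\xi_j,\psi_E)$ up to a factor of $\gamma$-factors of $\rho$ and its restriction that does not depend on $i$, that each individual factor $\gamma(s,\phi^{(i)}\xi_j,\psi_E)$ is independent of $i$. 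The crucial numerical input is that $\phi^{(1)}$ and $\phi^{(2)}$ agree on $1+\CP_E^2$ and on $C_E$ (hence on $F^\times$, by Remark \ref{subextensions}), differing only on $(1+\CP_E)\setminus(1+\CP_E^2)$; the conductoral exponent of $\phi^{(i)}$ is $2N-1$, which is odd. Since $r \leq [\tfrac N2]-1$, each $\xi_j$ has $\mathrm{val}_E$-conductor at most roughly $2r < N-1 \leq 2N-2$, so the standard representative $c_{\phi^{(i)}\xi_j}$, which governs $\epsilon$ by Moy's formula, depends only on the leading part of $\phi^{(i)}$, i.e. on $\beta = \varpi_E^{2-2N}$, which is the same for $i=1,2$. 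Then one checks that $\theta^{-1}(c_\theta)$, $\psi(c_\theta)$, $|c_\theta|$, and the Gauss sum $G(\theta,\psi)$ in Moy's formula, evaluated at $\theta = \phi^{(i)}\xi_j$, are unchanged when $i$ changes: the absolute value and $\psi$-term are immediate since $c_\theta$ is unchanged; the term $\theta^{-1}(c_\theta)$ is unchanged because $\phi^{(1)}$ and $\phi^{(2)}$ agree on $C_E\ni c_\theta$; and the Gauss sum, which involves $\theta$ restricted to $U_E^{n}/U_E^{n+1}$ with $n$ in the range where $\phi^{(1)}=\phi^{(2)}$ (because $f_E(\phi^{(i)}\xi_j)$ is large and the disagreement of $\phi^{(1)},\phi^{(2)}$ sits at the very bottom level $1+\CP_E$), is likewise unchanged. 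Assembling these over $j$ gives \eqref{gammaequ1}.

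I expect the main obstacle to be the bookkeeping around the conductors when $N$ is even: there $\phi = (\phi_1\circ N_{E/E_1})\phi_2$ has a two-step Howe factorization with jumps at $f_E(\phi_1) = 2N-2$ and $f_E(\phi_2) = \ell \in [2,N-1]$, so one must make sure that twisting by a character $\xi_j$ of conductor up to $2r$ does not interfere with either the top level (it does not, since $2r \leq N-2 < 2N-2$) nor, more delicately, push the combined conductor into the range $[\ell, \ell+1]$ in a way that would make the relevant Gauss sum sensitive to the low-level behavior of $\phi^{(i)}$; this is exactly the place where the hypothesis $r \leq [\tfrac N2]-1$ (as opposed to $r \leq [\tfrac N2]$) is used, and where the earlier erroneous argument presumably failed. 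A secondary technical point is justifying the reduction to $\epsilon$-factors — i.e. that all $L$-factors appearing are $1$ and that the ``error'' factors coming from $\gamma(s,\rho,\psi)$ and $\gamma(s,\Ind_{E/F}\Res_{W_E}\rho,\psi)$ cancel between $\pi_1$ and $\pi_2$ — which follows formally from inductivity of $\gamma$-factors together with the fact that the non-$\phi^{(i)}$ data ($E/F$, $\psi$, $\rho$) is common to both sides, but must be stated carefully. Once these are in place, the computation is, as in \cite{ALST18}, a direct application of Moy's formula term by term.
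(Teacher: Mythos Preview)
Your overall strategy—pass to Weil-group $\gamma$-factors via local Langlands, use inductivity, and apply Moy's formula—matches the paper's. But two steps fail.

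First, the assertion that $\Res_{W_E}\rho$ decomposes as a sum of \emph{characters} $\xi_j$ of $E^\times$ is wrong: by Mackey, $\Res_{W_E}\Ind_{W_L}^{W_F}\lambda=\bigoplus_g\Ind_{W_{K_g}}^{W_E}({}^g\lambda)$ with $K_g=E\cdot{}^gL$, and these summands have dimension $[K_g:E]>1$ whenever $L\not\subset E$. The paper avoids this by applying Mackey to the tensor product itself, obtaining characters $\theta_g^{(i)}$ of $K_g^\times$ rather than of $E^\times$.

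Second, and more seriously, the claim that ``each $\xi_j$ has $\mathrm{val}_E$-conductor at most roughly $2r$'' is false: there is no bound on $f_L(\lambda)$ in terms of $r=[L:F]$, so $\alpha$ can have arbitrarily negative valuation. In particular $c_\theta={}^g\beta+\alpha$ is \emph{not} in $C_{K_g}$ (nor is $N_{K_g/E_g}(c_\theta)$ in $C_E$), so your argument that $\theta^{-1}(c_\theta)$ is unchanged ``because $\phi^{(1)},\phi^{(2)}$ agree on $C_E\ni c_\theta$'' does not apply. Indeed the paper does \emph{not} prove that the individual factors $\gamma(s,\theta_g^{(i)},\psi_{K_g})$ are independent of $i$: Lemma~\ref{lem2} only reduces their ratio to $\theta_g^{(1)}({}^g\beta+\alpha)/\theta_g^{(2)}({}^g\beta+\alpha)$, and one must then show that the \emph{product} of these ratios over $g\in W_L\backslash W_F/W_E$ equals $1$. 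This is the heart of the proof: after a double-coset manipulation one obtains $\phi^{(i)}\bigl(\prod_{\sigma\in\Gal(L/F)}(1+\beta^{\mp 1}\sigma(\alpha^{\pm 1}))\bigr)$, expands, and uses that the elementary symmetric functions in the $\sigma(\alpha)$ lie in $F$ to control the $E$-valuation of each term modulo $N$, forcing it to avoid the value $1$ precisely when $r\le[\tfrac N2]-1$. For a single $g$ the partial symmetric functions lie only in $E$, the mod-$N$ constraint disappears, and nothing prevents the valuation from being $1$. Your termwise argument therefore cannot succeed; the trichotomy on $\mathrm{val}_K(\beta)$ versus $\mathrm{val}_K(\alpha)$ and the global product over $\Gal(L/F)$ are essential.
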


\section{Proof of Theorem \ref{main}}\label{proofmaintheorem}

Henceforth, if $T/F$ is an extension, we denote by $\mathrm{Gal}(T/F)$ the set of all embeddings of $T$ into $\overline{F}$ that fix $F$, whether or not $T/F$ is Galois.  Moreover, for a character $\chi$ of $F^{\times}$, we denote by $\chi_{T/F}$ the character $\chi \circ N_{T/F}$.

\textbf{Proof of Theorem \ref{main}.}
Let $\tau$ be any irreducible
supercuspidal representation of~$\RG_r$, $1 \leq r \leq [\frac{N}{2}]-1$. Since $p \nmid r$, by the discussion in Section 2, one may assume that the local Langlands parameter of $\tau$ is $\Ind_{W_L}^{W_F} \lambda$, for some admissible pair $(L/F, \lambda)$. Let $f_L(\lambda)$ be the conductoral exponent of $\lambda$, and let $m=f_L(\lambda)-1$, so that there exists an element $\alpha \in \CP_L^{-m}$ mod $\CP_L^{-[\frac{m}{2}]}$ that represents $\lambda$, that is,
$$\lambda(1+x) = \psi_L(\alpha x),$$
for $x \in \CP_L^{[\frac{m}{2}]+1}$.
Let $e_L = e(L/F)$ be the ramification index of $L/F$.  Finally, let $M$ be the Galois closure of $K=EL$, over $F$.

To prove \eqref{gammaequ1}, it suffices by the local Langlands correspondence for $GL_N$ to prove that
\begin{equation}\label{gammaequ3}
\gamma(s, \Ind_{W_E}^{W_F} \phi^{(1)} \otimes \Ind_{W_L}^{W_F} \lambda, \psi) = \gamma(s, \Ind_{W_E}^{W_F} \phi^{(2)} \otimes \Ind_{W_L}^{W_F} \lambda, \psi)
\end{equation}
for all such $(L/F, \lambda)$.  

Let $\theta_g^{(i)}=[({}^g \phi^{(i)} \circ N_{L({}^gE)/{}^g E}) \otimes (\lambda \circ N_{L({}^gE)/L})]$, for $g \in W_L\backslash W_F/W_E$, and write $K_g=L({}^gE)$, $E_g = {}^g E$. We note that ${}^g \beta+\alpha$ represents $\theta_g^{(i)}$, for both $i=1,2$.

By Lemma \ref{lem1}, \ref{lem3}, \ref{lem2}, one has to show that
\begin{equation}\label{equ6}
\displaystyle\prod_{g\in W_L\backslash W_F/W_E} {}^g \phi^{(1)} \circ N_{K_g / E_g}({}^g \beta + \alpha) = \displaystyle\prod_{g\in W_L\backslash W_F/W_E} {}^g \phi^{(2)} \circ N_{K_g / E_g}({}^g \beta + \alpha)
\end{equation}

We separate the proof of \eqref{equ6} into three cases: (1) $val_{K}( \beta) = val_{K}(\alpha)$; (2) $val_{K}(\beta) < val_{K}(\alpha)$; (3) $val_{K}(\beta) > val_{K}(\alpha)$, and we note that $val_{K_g}({}^g \beta) = val_K(\beta), val_{K_g}(\alpha) = val_K(\alpha) \ \forall g \in W_L \backslash W_F / W_E$.  Let $e = e(K/E)$ and $N' = e(K/L)$.  We have a diagram of fields, with ramification indices listed on the line segments.

\[
\begin{tikzpicture}[every node/.style={midway}]
  \matrix[column sep={4em,between origins}, row sep={2em}] at (0,0) {
    \node() {}  ; & \node(E L) {$K = E L$}; \\
     \node(E) {$E$}  ; & & \node(L) {$L$}; \\
    \node() {}; & \node(AA) {$E \cap L$};\\
    \node() {}; & \node(F) {$F$};\\
  };
  \draw[-] (E) -- (E L) node[anchor=east]  {$e$};
  \draw[-] (L) -- (E L) node[anchor=south] {$N'$};
  \draw[-] (L) -- (AA) node[anchor=south] {$e_1$};
  \draw[-] (E) -- (AA) node[anchor=south] {$e_2$};
  \draw[-] (F) -- (AA) node[anchor=east] {$e_2'$};
\end{tikzpicture}
\]

We note in particular that since $E/F$ is totally ramified, we have that $N = e_2 e_2'$.  We also have $ \beta \in \CP_{K}^{-e(2N-2)}$ and $\alpha \in \CP_{K}^{-m N'}$.

\textbf{Case (1)}. $val_{K}(\beta) = val_{K}(\alpha)$. Then, $e(2N-2)=mN'$. Since $E/F$ is totally ramified, and by the diamond, we have that $N' = \frac{e e_2}{e_1}$.  Altogether, the equality $e(2N-2)=mN'$ can be rewritten now as 
\[
2e_2 e_2' - 2 = m \frac{e_2}{e_1}.
\]
Multiplying both sides of the equality by $e_1 e_2'$ yields
\[
2N e_1 e_2' - 2 e_1 e_2' = m N.
\]
Simplifying, we get  $N(2e_1 e_2' - m) = 2 e_1 e_2' = 2e_L$. Therefore, we obtain that $N$ divides $2e_L$. On the other hand, we have $1 \leq e_L \leq r < \frac{N-1}{2}$, a contradiction.

\textbf{Case (2)}. $val_{K}(\beta) < val_{K}(\alpha)$. Since ${}^g \phi_{K_g/E_g}^{(1)}({}^g \beta) = {}^g \phi_{K_g/E_g}^{(2)}({}^g \beta)$, we just have to show that

\begin{equation}\label{equ6i}
\displaystyle\prod_{g\in W_L\backslash W_F/W_E} {}^g \phi^{(1)} \circ N_{K_g/E_g}(1 + ({}^g \beta)^{-1} \alpha) = \displaystyle\prod_{g\in W_L\backslash W_F/W_E} {}^g \phi^{(2)} \circ N_{K_g/E_g}(1+ ({}^g \beta)^{-1} \alpha)
\end{equation}

We compute the left hand side.
\begin{align*}
& \ \displaystyle\prod_{g\in W_L\backslash W_F/W_E} {}^g \phi^{(1)} \circ N_{K_g/E_g}(1 + ({}^g \beta)^{-1} \alpha)\\
= & \  \phi^{(1)} \left(\displaystyle\prod_{g\in W_L\backslash W_F/W_E} \left(g^{-1} \left(\prod_{\sigma \in \Gal(K_g/E_g)}(1+\sigma(({}^g \beta^{-1})\alpha))\right)\right)\right)\\
=  & \  \phi^{(1)} \left(\displaystyle\prod_{g\in W_L\backslash W_F/W_E} \prod_{\sigma \in \Gal(K_g/E_g)} g^{-1}(1+{}^g \beta^{-1}\sigma(\alpha))\right)\\
= & \ \phi^{(1)} \left(\displaystyle\prod_{g\in W_L\backslash W_F/W_E} \left(\prod_{\sigma \in \Gal(K_g/E_g)}(1+\beta^{-1} (g^{-1} \sigma)(\alpha))\right)\right)
\end{align*}
since ${}^g \beta^{-1} \in E_g$ is fixed by all $\sigma$.  

We now note that $$W_L\backslash W_F/W_E \cong W_E\backslash W_F/W_L \cong Gal(M/E) \backslash Gal(M/F) / Gal(M/L),$$ where $M$ is the Galois closure of the compositum $EL$ (see \cite[Lemma 2.6]{ALST18}.  We obtain that $W_L\backslash W_F/W_E$ can be interpreted as the orbits in $Gal(M/F) / Gal(M/L) = Gal(L/F)$, under the action of $Gal(M/E)$.  To view this action, we can think of $Gal(L/F)$ as embeddings of $L$ into $M$ (since $M$ is the Galois closure of $EL$) that fix $F$.   

We are therefore considering a set of elements $g^{-1} \sigma$, where $g$ ranges over a set of representatives of the orbits of $Gal(M/E)$ acting on $Gal(L/F)$, and $\sigma \in Gal(K_g/E_g)$.  We wish to show that this set of $g^{-1} \sigma$ exhausts all of $Gal(L/F)$.  In fact it is better to write $g^{-1} \sigma$ as $(g^{-1} \sigma g) g^{-1}$.  We will show that these elements exhaust all of $Gal(L/F)$.

So we now prove that $$\{g^{-1} \sigma : g \in Gal(M/E) \backslash Gal(M/F) / Gal(M/L), \sigma \in \Gal(K_g/E_g) \}$$ can be identified canonically with $Gal(L/F)$, which would simplify the above expression to $$\phi^{(1)} \left(\prod_{\sigma \in \Gal(L/F)}(1+\beta^{-1} \sigma(\alpha))\right).$$
Let $H = Gal(M/E), G = Gal(M/F), J = Gal(M/L)$, so that $H$ acts on $G / J$.  We consider an orbit $Hg^{-1}J, g \in G$.  By the orbit-stabilizer theorem, $Hg^{-1}J \cong \frac{H}{\mathrm{Stab}_H(g^{-1}J)}.$  Furthermore, we argue that $\mathrm{Stab}_H(g^{-1}J) \cong Gal(M / (g^{-1} K_g))$.  Indeed, first note that $x \in Gal(M/E)$ stabilizes $g^{-1} J$ if and only if $x g^{-1} J = g^{-1} J$, i.e. that $x \in g^{-1} J g = Gal(g^{-1} M / g^{-1} L) = Gal(M / g^{-1} L)$, since $g^{-1} M = M$.  Therefore, 

\begin{align*}
& \ x \in \mathrm{Stab}_H(g^{-1}J)\\
\iff & \  x \in Gal(M/E) \cap Gal(M / (g^{-1} L))\\
\iff  & \  x \in Gal(M / ((g^{-1} L) E) = Gal(M / (g^{-1} K_g))
\end{align*}

We conclude that $H g^{-1} J \cong \frac{H}{\mathrm{Stab}_H(g^{-1}J)} \cong \frac{Gal(M/E)}{Gal(M/(g^{-1} K_g))} \cong Gal((g^{-1}K_g)/E)$, noting that $g^{-1} K_g = (g^{-1} L) E$.  Now, via $\eta \mapsto g \eta g^{-1}$, we have $Gal((g^{-1}K_g)/E) \cong Gal(K_g/E_g)$.  The identifications above now allow us to conclude an isomorphism $Gal(K_g / E_g) \xrightarrow{\sim} \frac{H}{\mathrm{Stab}_H(g^{-1}J)}$ given by $\sigma$ maps to the class of $g^{-1} \sigma g$, with $\sigma \in Gal(K_g/E_g)$.  Therefore, the orbit $Hg^{-1} J$ can be identified with the set of $(g^{-1} \sigma g) g^{-1} J$ such that $\sigma \in Gal(K_g / E_g)$, i.e. the set of $g^{-1} \sigma J$, which is exactly what we set out to prove.

So we obtain $$\phi^{(1)} \left(\prod_{\sigma \in \Gal(L/F)}(1+\beta^{-1} \sigma(\alpha))\right).$$

We now have
$$\phi^{(1)} \left( \prod_{\sigma \in \Gal(L/F)}(1+\beta^{-1} \sigma(\alpha)) \right)\\
= \phi^{(1)} \left(1 + \sum_{i=1}^{s}  \beta^{-i} \displaystyle\sum_{H \in B_i}\displaystyle\prod_{\sigma \in H} \sigma(\alpha) \right)$$

where $s = \# Gal(L/F)$, and where $B_i$ is the set of all $i$-element subsets of $Gal(L/F)$.  

Claim: $P(\alpha) :=  \displaystyle\sum_{H \in B_i}\displaystyle\prod_{\sigma \in H} \sigma(\alpha) \in F$.  To see this, firstly, the sum is over $Gal(L/F)$. By construction, $P(\alpha)$ is fixed by every embedding $h$ in $Gal(\overline{L}/F)$ because composition with $h$ just permutes the embeddings in $Gal(L / F)$, so $h$ just permutes the terms in the sum giving $P(\alpha)$.

Since $ \beta^{-1} \in \CP_{E}^{2N-2}$, and $\alpha \in \CP_L^{-m}$, we have
\[
\beta^{-i}  \displaystyle\sum_{H \in B_i}\displaystyle\prod_{\sigma \in H} \sigma(\alpha) \in \CP_{E}^{i(2N-2)+ e_2 e_2' \lceil \frac{-im}{e_1 e_2'} \rceil},
\]
noting that $\CP_{L}^{-im} \cap F = \CP_{F}^{\lceil \frac{-im}{e_1 e_2'} \rceil}$ (this is where we use the above claim that $P(\alpha) \in F$).

We write $$A :=i(2N-2)+ e_2 e_2' \lceil \frac{-im}{e_1 e_2'} \rceil = i(2N-2)+ N \lceil \frac{-im}{e_1 e_2'} \rceil $$  Modulo $N$, we get 
$$  A \equiv -2i \ (mod \ N),$$

Since $1 \leq i \leq s$, we have that, modulo $N$,
$2 \leq -A \leq 2s$.  Note also that $1 \leq r < \frac{N-1}{2}$.  Therefore, we have that, modulo $N$, $2 \leq -A \leq 2s \leq 2r < N-1$.  In particular, one now sees that $A \not\equiv 1 \ \mathrm{mod} \ N$. In particular, $A \neq 1$.  Hence, $A \geq 2$, and $1 + \sum_{i=1}^{s}  \beta^{-i} \displaystyle\sum_{H \in B_i}\displaystyle\prod_{\sigma \in H} \sigma(\alpha) \in 1+\CP_{E}^2$. Therefore, since $\phi^{(1)}, \phi^{(2)}$ agree on $1 + \mathcal{P}_E^2$, we have
$$\phi^{(1)} \left(1 + \sum_{i=1}^{s}  \beta^{-i} \displaystyle\sum_{H \in B_i}\displaystyle\prod_{\sigma \in H} \sigma(\alpha) \right) = \phi^{(2)} \left(1 + \sum_{i=1}^{s}  \beta^{-i} \displaystyle\sum_{H \in B_i}\displaystyle\prod_{\sigma \in H} \sigma(\alpha) \right),$$
completing the proof of Case (2).

%

\textbf{Case (3)}. $val_{K}(\beta) > val_{K}(\alpha)$.  This proof here is similar to the proof of case (2).  We note that 
\begin{align*}
& \ \displaystyle\prod_{g\in W_L\backslash W_F/W_E} {}^g \phi^{(i)} \circ N_{K_g/E_g}(\alpha)\\
 = & \ \phi^{(i)} \left(\displaystyle\prod_{g\in \mathrm{Gal}(M/E) \backslash \mathrm{Gal}(L/F)} \prod_{\sigma \in \Gal(K/E)} \sigma(\alpha))\right)\\
  = & \ \phi^{(i)} \left(\displaystyle\prod_{g\in \mathrm{Gal}(K/E) \backslash \mathrm{Gal}(L/F)} \prod_{\sigma \in \Gal(K/E)} \sigma(\alpha))\right)\\
    = & \ \phi^{(i)} \left(\prod_{\sigma \in \Gal(L/F)} \sigma(\alpha)\right)\\
=  & \ \phi^{(i)}(N_{L/F}(\alpha)),
\end{align*}
and since $\phi^{(1)}|_{F^{\times}} \equiv \phi^{(2)}|_{F^{\times}}$ (see Remark \ref{subextensions}), we see that \eqref{equ6} is equivalent to 
$$\displaystyle\prod_{g\in W_L\backslash W_F/W_E} {}^g \phi^{(1)} \circ N_{K_g / E_g}(1 + {}^g \beta \alpha^{-1}) = \displaystyle\prod_{g\in W_L\backslash W_F/W_E} {}^g \phi^{(2)} \circ N_{K_g / E_g}(1 + {}^g \beta \alpha^{-1})$$

But the argument now is the same is in case (2).  On the left, we arrive at the term 
$$\phi^{(1)} \left( \prod_{\sigma \in \Gal(L/F)}(1+\beta \sigma(\alpha^{-1})) \right),$$
and also the term (using the same notation as in case (2)) $A = -( i(2N-2)+ e_2 e_2' \lceil \frac{-im}{e_1 e_2'} \rceil)$, which is precisely the negative of the term $A$ that appeared in case (2).  We still have $A > 0$, since $val_K(\beta) > val_K(\alpha)$.  Then, $A \equiv 2i \ (mod \ N)$, and $2 \leq A < N-1$, so that $A \not\equiv 1 \ \mathrm{mod} \ N$ and thus $A \neq 1$.  Hence, we have proven Case (3).

This completes the proof of Theorem \ref{main}, up to the the proofs of Lemmas \ref{lem1}, \ref{lem3}, and \ref{lem2}.
\qed

\subsection{Lemmas}
In this section, we lay out the various Lemmas that we needed in the proof of Theorem \ref{main}.

\begin{lem}\label{lem1}\cite[Lemma 2.5]{ALST18}
For $i=1,2$, $(Ind_{W_E}^{W_F} \phi^{(i)} ) \otimes (Ind_{W_L}^{W_F} \lambda)$ is isomorphic  to

\[
\bigoplus_{g\in W_L\backslash W_F/W_E} Ind_{W_{L({}^gE)}}^{W_F} [({}^g \phi^{(i)} \circ N_{L({}^gE)/{}^g E}) \otimes (\lambda \circ N_{L({}^gE)/L})].
\]
\end{lem}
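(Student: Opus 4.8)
The plan is to prove this by standard Mackey theory for the Weil group $W_F$, together with local class field theory to identify the one-dimensional pieces that appear; this is \cite[Lemma 2.5]{ALST18}, and the argument below sketches how it goes.

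First I would invoke the projection formula (push--pull): for any closed subgroup $H \le W_F$, any representation $V$ of $H$, and any representation $W$ of $W_F$, there is a natural isomorphism $(\Ind_H^{W_F} V) \otimes W \cong \Ind_H^{W_F}(V \otimes \Res_H W)$. Applying this with $H = W_L$, $V = \lambda$, and $W = \Ind_{W_E}^{W_F}\phi^{(i)}$ gives
\[
(\Ind_{W_E}^{W_F}\phi^{(i)}) \otimes (\Ind_{W_L}^{W_F}\lambda) \;\cong\; \Ind_{W_L}^{W_F}\big(\lambda \otimes \Res_{W_L}\Ind_{W_E}^{W_F}\phi^{(i)}\big).
\]
Then I would apply Mackey's restriction--induction formula to the inner term:
\[
\Res_{W_L}\Ind_{W_E}^{W_F}\phi^{(i)} \;\cong\; \bigoplus_{g\in W_L\backslash W_F/W_E} \Ind_{W_L \cap {}^gW_E}^{W_L}\big({}^g\phi^{(i)}\big),
\]
where ${}^gW_E = gW_Eg^{-1}$ and ${}^g\phi^{(i)}$ is the conjugated character, $x \mapsto \phi^{(i)}(g^{-1}xg)$. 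The finiteness of the sum is automatic since $W_E$ and $W_L$ have finite index in $W_F$.

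The key identifications come next. Conjugation by $g$ on the Galois side carries $E$ to ${}^gE$, so ${}^gW_E = W_{{}^gE}$; and the Weil group of a compositum is the intersection of the Weil groups of the two factors, so $W_L \cap {}^gW_E = W_{L\cdot{}^gE} = W_{L({}^gE)}$. Under local class field theory the character ${}^g\phi^{(i)}$ of $W_{{}^gE}$ corresponds to the character of $({}^gE)^\times$ obtained by transport of structure, and restriction of a character from $W_{{}^gE}$ to $W_{L({}^gE)}$ corresponds to precomposition with the norm; thus $\Res_{W_{L({}^gE)}}{}^g\phi^{(i)} = {}^g\phi^{(i)}\circ N_{L({}^gE)/{}^gE}$, and likewise $\Res_{W_{L({}^gE)}}\lambda = \lambda\circ N_{L({}^gE)/L}$.

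Finally I would substitute the Mackey decomposition back, distribute the tensor with $\lambda$ over the direct sum, apply the projection formula once more inside $W_L$ (with $H = W_{L({}^gE)}\le W_L$, $V = {}^g\phi^{(i)}\circ N_{L({}^gE)/{}^gE}$, $W = \lambda$) to move $\lambda$ across the induction, and use transitivity of induction $\Ind_{W_L}^{W_F}\Ind_{W_{L({}^gE)}}^{W_L} = \Ind_{W_{L({}^gE)}}^{W_F}$, arriving at
\[
\bigoplus_{g\in W_L\backslash W_F/W_E} \Ind_{W_{L({}^gE)}}^{W_F}\big[({}^g\phi^{(i)}\circ N_{L({}^gE)/{}^gE}) \otimes (\lambda\circ N_{L({}^gE)/L})\big],
\]
as claimed. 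The one point requiring care is the bookkeeping around double cosets: identifying $W_L\cap {}^gW_E$ with $W_{L({}^gE)}$ and checking that the conjugation and restriction operations are compatible with the class field theory normalizations used elsewhere in the paper. Everything else is formal.
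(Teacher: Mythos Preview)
Your argument is correct: this is the standard Mackey-theoretic computation (projection formula, Mackey decomposition, identification $W_L\cap{}^gW_E=W_{L({}^gE)}$, class field theory for the restriction-to-norm translation, and transitivity of induction), and it yields exactly the stated decomposition. The present paper does not supply its own proof of this lemma; it simply cites \cite[Lemma~2.5]{ALST18}, so there is nothing in the paper to compare against beyond that reference, and your sketch is precisely the argument one expects to find there.
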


Recall that $\theta_g^{(i)}=[({}^g \phi^{(i)} \circ N_{L({}^gE)/{}^g E}) \otimes (\lambda \circ N_{L({}^gE)/L})]$, for $g \in W_L\backslash W_F/W_E$, and $K_g = L({}^gE)$, $E_g = {}^g E$.  We also write $\nu_g^{(i)} = [({}^g \phi^{(i)} \circ N_{M_g/E_g}) \otimes (\lambda \circ N_{M_g/L})]$ for $i = 1, 2$.

Since the $L$-function is inductive, multiplicative, and trivial on ramified characters, and by \cite[Lemma 2.4]{ALST18}, we have

\begin{lem}\label{lem3}
We have for $i = 1,2$,
\[
\gamma(s, \bigoplus_{g\in W_L\backslash W_F/W_E} \Ind_{W_{K_g}}^{W_F} \theta_g^{(i)}, \psi_F) = \displaystyle\prod_{g\in W_L\backslash W_F/W_E}
\lambda_{K_g/F}(\psi_F)\gamma(s,\theta_g^{(i)}, \psi_{K_g}),
\]
where $\lambda_{K_g/F}(\psi_F)$ is the Langlands constant associated to $K_g/F$ and $ \psi_F$ (see \cite[Theorem 29.4]{BH06}, \cite[\S34.3]{BH06}), and $\psi_{K_g}=\psi_F \circ \mathrm{tr}_{K_g/F}$. The formula is also true when $\gamma$ is replaced by $\epsilon$.
\end{lem}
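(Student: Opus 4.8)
The plan is to reduce the identity to the case of a single induced representation by the multiplicativity of $\gamma$- and $\epsilon$-factors in direct sums, and then to invoke the inductivity formula \cite[Lemma 2.4]{ALST18} termwise.

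First I would record that each $\theta_g^{(i)}$ is one-dimensional: by definition $\theta_g^{(i)} = ({}^g\phi^{(i)}\circ N_{K_g/E_g})\cdot(\lambda\circ N_{K_g/L})$ is a product of two quasi-characters of $K_g^\times$, hence a quasi-character of $K_g^\times\simeq W_{K_g}^{\mathrm{ab}}$. This is exactly what makes the Langlands constant $\lambda_{K_g/F}(\psi_F)$ enter to the first power rather than to a higher power.

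Next, using that the Deligne--Langlands local constant $\epsilon$ is multiplicative in direct sums, and that the Artin $L$-function is multiplicative (so that $\gamma(s,\cdot,\psi)=\epsilon(s,\cdot,\psi)\,L(1-s,(\cdot)^{\vee})/L(s,\cdot)$ is multiplicative as well), I would write
\[
\gamma\Bigl(s,\ \bigoplus_{g} \Ind_{W_{K_g}}^{W_F}\theta_g^{(i)},\ \psi_F\Bigr) = \prod_{g} \gamma\bigl(s,\ \Ind_{W_{K_g}}^{W_F}\theta_g^{(i)},\ \psi_F\bigr),
\]
the (co)products running over $g\in W_L\backslash W_F/W_E$, the same index set as in Lemma~\ref{lem1}, and similarly with $\epsilon$ in place of $\gamma$. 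Then I would apply \cite[Lemma 2.4]{ALST18} to each factor --- this is the inductivity in degree zero of the $\gamma$-factor, which follows from inductivity of $\epsilon$ together with the inductivity, multiplicativity and triviality on ramified characters of Artin $L$-functions --- with the induced additive character normalized as $\psi_{K_g}=\psi_F\circ\mathrm{tr}_{K_g/F}$ and with $\dim\theta_g^{(i)}=1$ from the previous step. This gives
\[
\gamma\bigl(s,\ \Ind_{W_{K_g}}^{W_F}\theta_g^{(i)},\ \psi_F\bigr) = \lambda_{K_g/F}(\psi_F)\,\gamma\bigl(s,\ \theta_g^{(i)},\ \psi_{K_g}\bigr),
\]
and, verbatim, the same identity with $\epsilon$ replacing $\gamma$; substituting into the product yields the lemma. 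For the $\epsilon$-statement one may alternatively observe that in the cases relevant to Theorem~\ref{main} the element ${}^g\beta+\alpha$ has strictly negative $K_g$-valuation, so $\theta_g^{(i)}$ and hence $\Ind_{W_{K_g}}^{W_F}\theta_g^{(i)}$ are ramified, all the $L$-factors above equal $1$, and $\gamma$ coincides with $\epsilon$ throughout.

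There is no serious obstacle here; the proof is essentially bookkeeping. The only points that need care are the normalization $\psi_{K_g}=\psi_F\circ\mathrm{tr}_{K_g/F}$ of the additive character used in the inductivity formula, the appearance of $\lambda_{K_g/F}(\psi_F)$ to the power $\dim\theta_g^{(i)}=1$, and checking that the summands produced by Lemma~\ref{lem1} are matched exactly with the factors on the right-hand side.
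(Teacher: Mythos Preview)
Your argument is correct and follows exactly the route the paper takes: the paper's justification is the single sentence ``Since the $L$-function is inductive, multiplicative, and trivial on ramified characters, and by \cite[Lemma 2.4]{ALST18}, we have\ldots'', and you have simply unpacked that sentence into multiplicativity on the direct sum followed by the termwise inductivity formula from \cite[Lemma 2.4]{ALST18}. Your care about $\dim\theta_g^{(i)}=1$ and the normalization $\psi_{K_g}=\psi_F\circ\mathrm{tr}_{K_g/F}$ is appropriate and matches the paper's conventions.
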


\begin{lem}\label{lem2}
\[
\frac{\gamma(s, \theta_g^{(1)}, \psi_{K_g})}{\gamma(s, \theta_g^{(2)}, \psi_{K_g})} = \frac{\theta_g^{(1)}({}^g \beta + \alpha)}{ \theta_g^{(2)}({}^g \beta + \alpha)}
\]
\end{lem}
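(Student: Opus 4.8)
\textbf{Proof proposal for Lemma \ref{lem2}.}

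The plan is to compute the ratio of the two gamma factors by reducing it to a ratio of epsilon factors and then applying Moy's explicit formula (Theorem \ref{epsilonfactorformula}) together with the fact that $\theta_g^{(1)}$ and $\theta_g^{(2)}$ have the same conductoral exponent and the same standard representative ${}^g\beta+\alpha$. First I would recall that $\gamma(s,\theta,\psi)$ differs from $\epsilon(s,\theta,\psi)$ only by a ratio of $L$-factors $L(1-s,\theta^{-1})/L(s,\theta)$; since $\theta_g^{(1)}$ and $\theta_g^{(2)}$ are both ramified characters of $K_g^\times$ (indeed they share the same restriction to $1+\CP_{K_g}^2$, and one checks from the construction that the conductoral exponent exceeds $1$), their $L$-factors are all identically $1$, so the $L$-factors cancel and $\gamma(s,\theta_g^{(1)},\psi_{K_g})/\gamma(s,\theta_g^{(2)},\psi_{K_g}) = \epsilon(s,\theta_g^{(1)},\psi_{K_g})/\epsilon(s,\theta_g^{(2)},\psi_{K_g})$, which at $s=0$ is $\epsilon(\theta_g^{(1)},\psi_{K_g})/\epsilon(\theta_g^{(2)},\psi_{K_g})$. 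It suffices to treat $s=0$ since twisting by $|\cdot|^s$ does not change the standard representative.

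Next I would apply Theorem \ref{epsilonfactorformula} to each of $\epsilon(\theta_g^{(i)},\psi_{K_g})$. The point already noted in the text is that $c_{\theta_g^{(i)}}$ can be taken to be ${}^g\beta+\alpha$ for both $i=1,2$ (up to the appropriate modulus), and $|c_{\theta_g^{(1)}}| = |c_{\theta_g^{(2)}}|$ since the conductoral exponents agree. Therefore, in the even-conductor case the ratio is simply
\[
\frac{\epsilon(\theta_g^{(1)},\psi_{K_g})}{\epsilon(\theta_g^{(2)},\psi_{K_g})} = \frac{(\theta_g^{(1)})^{-1}({}^g\beta+\alpha)}{(\theta_g^{(2)})^{-1}({}^g\beta+\alpha)} = \frac{\theta_g^{(2)}({}^g\beta+\alpha)}{\theta_g^{(1)}({}^g\beta+\alpha)},
\]
wait — I must be careful with the direction: the statement of the lemma has $\theta_g^{(1)}/\theta_g^{(2)}$ on the right, so I would double-check the sign conventions in Moy's formula, but modulo that bookkeeping the even case is immediate. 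In the odd-conductor case there is an extra Gauss-sum factor $G(\theta_g^{(i)},\psi_{K_g})$, and the substantive work is to show $G(\theta_g^{(1)},\psi_{K_g}) = G(\theta_g^{(2)},\psi_{K_g})$.

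The Gauss-sum equality is the main obstacle. The Gauss sum $G(\theta,\psi) = q^{-1/2}\sum_{x\in H/J}\theta^{-1}(x)\psi(c_\theta(x-1))$ depends on $\theta$ only through $c_\theta$ (which is common to both) and through the restriction of $\theta$ to $H = U^n$ modulo $J = U^{n+1}$, where $2n+1$ is the conductoral exponent. So the claim reduces to showing that $\theta_g^{(1)}$ and $\theta_g^{(2)}$ agree on the relevant unit group modulo the next one; equivalently, that they agree on $1 + \CP_{K_g}^n$ modulo $1+\CP_{K_g}^{n+1}$ for the appropriate $n$. Here I would invoke that $\theta_g^{(i)} = ({}^g\phi^{(i)}\circ N_{K_g/E_g})\otimes(\lambda\circ N_{K_g/L})$, and the only difference between $i=1$ and $i=2$ lives in ${}^g\phi^{(i)}$ restricted to $(1+\CP_E)\setminus(1+\CP_E^2)$; pulling this back through the norm $N_{K_g/E_g}$ one sees the difference lies in $1+\CP_{K_g}^{e}\setminus 1+\CP_{K_g}^{2e}$ where $e = e(K_g/E_g)$, which in the odd-conductor situation is a strictly lower filtration level than $n$ (this uses $n \geq 2e$, i.e. the conductoral exponent being large, which follows from $\beta = \varpi_E^{2-2N}$ having large valuation and $N\geq 5$). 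Hence both characters have the same restriction to $H$ modulo $J$, so the Gauss sums coincide, and the odd case reduces to the same computation as the even case. I would present the filtration-level comparison carefully, as that inequality — $n\geq 2e(K_g/E_g)$, uniformly in $g$ — is the crux; everything else is Moy's formula plus bookkeeping. (Alternatively, and perhaps more cleanly, one cites \cite[proof of the analogous lemma]{ALST18} and remarks that the argument there, which handles a single gamma factor, applies verbatim to each factor $\theta_g^{(i)}$ in the present product decomposition.)
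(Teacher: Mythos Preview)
Your proposal is correct and follows essentially the same approach as the paper: reduce $\gamma$ to $\epsilon$, invoke Moy's formula with the common representative ${}^g\beta+\alpha$, dispatch the even-conductor case immediately, and for odd conductor show the Gauss sums agree by checking that $\phi^{(1)}\circ N_{K/E}$ and $\phi^{(2)}\circ N_{K/E}$ coincide on $U_K^n$ via the filtration estimate $n \geq e(N-1) > e$ (the paper phrases this as $N_{K/E}(U_K^n)\subset U_E^2$, after reducing to $g=1$). Your observation about the direction of the ratio is well-taken---Moy's formula literally gives the inverse of what the lemma states---but as you note this is harmless bookkeeping since the application \eqref{equ6} only asks that the two products be equal.
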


\begin{proof}
It suffices to prove the formula with $\gamma$ replaced by $\epsilon$, by the proof of Lemma \ref{lem3}.  

We first note that $c_{\nu_g^{(1)}} = c_{\nu_g^{(2)}} = {}^g\beta + \alpha$ so that in particular $f := f_{K_g}(\nu_g^{(1)}) = f_{K_g}(\nu_g^{(2)})$. If $f$ is even, then the the proposed equality is easily seen to be true by Theorem \ref{epsilonfactorformula}.

Suppose that $f=2n+1$ is odd. We must check that $G(\theta_g^{(1)}, \psi_{K_g}) = G(\theta_g^{(2)}, \psi_{K_g})$. The impact of $g \in W_L \backslash W_F / W_E$ is negligible, so we will assume that $g = 1$.  

Set $\theta^{(i)} =  \phi^{(i)} \circ N_{LE/ E} \otimes \lambda \circ N_{LE/L},$ $K = EL$, and $f = f_K(\theta^{(1)}) = f_K(\theta^{(2)}) = 2n+1$ is odd. 
Since $c_{\theta^{(1)}} = c_{\theta^{(2)}} = \beta + \alpha$, and by definition of $\theta^{(i)}$, it suffices to check that 
\begin{equation}\label{formula1}
(\phi^{(1)} \circ N_{K/E})(x) = (\phi^{(2)} \circ N_{K/E})(x) \ \mathrm{for} \ x \in U_{K}^n = 1+\CP_{K}^n.
\end{equation}
We first note that $f = \mathrm{max} \{e(2N-2), mN' \} +1$.  Therefore, since $N \geq 5$, we have that $n \geq \frac{3}{2} > e$, so that $N_{K/E}(U_K^n) \subset U_E^2$ by a standard property of the norm map.  Since $\phi^{(1)}$ and $\phi^{(2)}$ agree on $U_E^2$, we have that
\eqref{formula1} is proven.
\end{proof}

\end{document}